\theoremstyle{plain}
\newtheorem{thm}{Theorem}
\newtheorem{cor}[thm]{Corollary}
\theoremstyle{definition}
\newtheorem*{rem}{Remark}
\newtheorem*{ack}{Acknowledgments}
\title{A note on the span of Hadamard products of vectors}
\author{Hajime Tanaka \\ \emph{\small Division of Mathematics, Graduate School of Information Sciences, Tohoku University} \\[-1mm] \emph{\small 6-3-09 Aramaki-Aza-Aoba, Aoba-ku, Sendai 980-8579, Japan} \\[-1mm] {\small \emph{E-mail:} htanaka@math.is.tohoku.ac.jp}}
\date{}
\begin{document}

\maketitle

\vspace{-5mm}
\begin{center}
------------------------------------------------------------------------------------------------------------------------------
\end{center}
\vspace{-6mm}
\begin{abstract}
We give a new proof of Theorem 6 in [L. Qiu and X. Zhan, On the span of Hadamard products of vectors, Linear Algebra Appl. 422 (2007) 304--307].

\medskip\noindent
\emph{AMS classification:} 15A03; 15A04; 15A57 \\
\emph{Keywords:} Hadamard product; Range
\end{abstract}
\vspace{-5mm}
\begin{center}
------------------------------------------------------------------------------------------------------------------------------
\end{center}

\bigskip

The \emph{Hadamard product} $A\circ B$ of two $m\times n$ complex matrices $A=(A_{ij}), B=(B_{ij})$ is defined by their entry-wise multiplication, i.e., $(A\circ B)_{ij}=A_{ij}B_{ij}$.
Qiu and Zhan \cite{QZ2007LAA} studied the range of the Hadamard product of $n\times n$ complex matrices.
In this note, we give a new proof of the following theorem:

\begin{thm}[{\cite[Theorem 6]{QZ2007LAA}}]\label{thm:Theorem 6}
Let $B_1,B_2,\dots,B_k$ be $n\times n$ complex matrices.
Then
\begin{equation*}
	\mathrm{span}\{(B_1x_1)\circ\dots\circ(B_kx_k)|x_1,\dots,x_k\in\mathbb{C}^n\}=\mathrm{range}((B_1B_1^*)\circ\dots\circ(B_kB_k^*)),
\end{equation*}
where ${}^*$ means conjugate transpose.
\end{thm}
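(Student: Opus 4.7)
My plan is to recast the Hadamard product as the adjoint of a fixed ``diagonal'' linear map applied to a tensor product, after which the theorem becomes a direct consequence of the standard identity $\mathrm{range}(T)=\mathrm{range}(TT^*)$ for linear maps between finite-dimensional Hilbert spaces.

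Concretely, let $\Delta\colon\mathbb{C}^n\to(\mathbb{C}^n)^{\otimes k}$ be defined on the standard basis by $\Delta e_i=e_i^{\otimes k}$. A short computation shows that its adjoint satisfies $\Delta^*(v_1\otimes\cdots\otimes v_k)=v_1\circ\cdots\circ v_k$, so Hadamard product is literally the ``diagonal part'' of a tensor product. Consequently
\[
(B_1x_1)\circ\cdots\circ(B_kx_k)=\Delta^*(B_1\otimes\cdots\otimes B_k)(x_1\otimes\cdots\otimes x_k),
\]
and since the simple tensors $x_1\otimes\cdots\otimes x_k$ span $(\mathbb{C}^n)^{\otimes k}$, the span on the left-hand side of Theorem~\ref{thm:Theorem 6} coincides with $\mathrm{range}(\Delta^*(B_1\otimes\cdots\otimes B_k))$.

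Applying $\mathrm{range}(T)=\mathrm{range}(TT^*)$ (which follows from $\ker(TT^*)=\ker(T^*)$) to $T=\Delta^*(B_1\otimes\cdots\otimes B_k)$ rewrites this range as $\mathrm{range}\bigl(\Delta^*(B_1B_1^*\otimes\cdots\otimes B_kB_k^*)\Delta\bigr)$. To finish, I would verify the formal identity $\Delta^*(A_1\otimes\cdots\otimes A_k)\Delta=A_1\circ\cdots\circ A_k$ for arbitrary $n\times n$ matrices $A_\ell$, which is immediate from computing the $(i,j)$ entry as $(e_i^{\otimes k})^*(A_1\otimes\cdots\otimes A_k)e_j^{\otimes k}=\prod_\ell(A_\ell)_{ij}$; applied with $A_\ell=B_\ell B_\ell^*$ this gives the range on the right-hand side of the theorem. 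There is no substantive obstacle: the whole argument is bookkeeping once one notices that the Hadamard product is the compression by $\Delta$ of a tensor product, and it is precisely this symmetry---$\Delta^*$ acting on the left and $\Delta$ on the right---that makes both sides of the theorem appear naturally when $\mathrm{range}(T)=\mathrm{range}(TT^*)$ is invoked.
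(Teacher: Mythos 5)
Your proof is correct, and it takes a recognizably different route from the paper's, even though both exploit tensor products. You encode the Hadamard product as $\Delta^*$ applied to a tensor product, where $\Delta e_i=e_i^{\otimes k}$, so that the left-hand side of the theorem becomes $\mathrm{range}(T)$ for $T=\Delta^*(B_1\otimes\cdots\otimes B_k)$ and the right-hand side becomes $\mathrm{range}(TT^*)$ via the identity $\Delta^*(A_1\otimes\cdots\otimes A_k)\Delta=A_1\circ\cdots\circ A_k$; the theorem is then exactly the standard fact $\mathrm{range}(T)=\mathrm{range}(TT^*)$, and both inclusions come out simultaneously. The paper instead proves the two inclusions separately: the easy one by observing that the columns of $(B_1B_1^*)\circ\cdots\circ(B_kB_k^*)$ already have the required form, and the hard one by pairing $(B_1x_1)\circ\cdots\circ(B_kx_k)$ against $Ey$, where $E$ is the orthogonal projection onto the orthogonal complement of the range, and showing that the auxiliary vector $\sum_{i=1}^n(B_1^*e_i)\otimes\cdots\otimes(B_k^*e_i)\otimes\overline{E}e_i$ in a $(k+1)$-fold tensor power has norm zero by a trace computation. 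Your factorization makes the appearance of $B_\ell B_\ell^*$ completely transparent --- it is literally $TT^*$ --- and replaces the projection-and-trace argument by a one-line lemma whose proof ($\ker(TT^*)=\ker(T^*)$ plus a rank count) is itself the same ``norm squared equals zero'' mechanism the paper uses by hand; the paper's version is more computational but needs no auxiliary operator. The steps you leave as short computations (the formula for $\Delta^*$ on simple tensors, $(B_1\otimes\cdots\otimes B_k)^*=B_1^*\otimes\cdots\otimes B_k^*$, and the $(i,j)$-entry calculation) all check out.
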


\begin{proof}
Let $e_i$ be the vector in $\mathbb{C}^n$ with a $1$ in the $i$th coordinate and $0$ in all other coordinates.
Then for $1\leqslant i\leqslant n$ the $i$th column of $(B_1B_1^*)\circ\dots\circ(B_kB_k^*)$ is
\begin{equation*}
	((B_1B_1^*)\circ\dots\circ(B_kB_k^*))e_i=(B_1B_1^*e_i)\circ\dots\circ(B_kB_k^*e_i).
\end{equation*}
So
\begin{equation*}
	\mathrm{range}((B_1B_1^*)\circ\dots\circ(B_kB_k^*))\subseteq\mathrm{span}\{(B_1x_1)\circ\dots\circ(B_kx_k)|x_1,\dots,x_k\in\mathbb{C}^n\}.
\end{equation*}
We show
\begin{equation}\label{eqn:inclusion}
	\mathrm{span}\{(B_1x_1)\circ\dots\circ(B_kx_k)|x_1,\dots,x_k\in\mathbb{C}^n\}\subseteq\mathrm{range}((B_1B_1^*)\circ\dots\circ(B_kB_k^*)).
\end{equation}
Pick any $x_1,x_2,\dots,x_k\in\mathbb{C}^n$.
Let $E$ be the orthogonal projection onto $\mathrm{range}((B_1B_1^*)\circ\dots\circ(B_kB_k^*))^{\perp}$.
Then for any $y\in\mathbb{C}^n$ we have
\begin{align*}
	\langle (B_1x_1)\circ\dots\circ(B_kx_k),Ey\rangle&=\sum_{i=1}^n\langle B_1x_1,e_i\rangle\dots\langle B_kx_k,e_i\rangle\overline{\langle Ey,e_i\rangle} \\
	&=\sum_{i=1}^n\langle x_1\otimes\dots\otimes x_k\otimes\overline{y},(B_1^*e_i)\otimes\dots\otimes(B_k^*e_i)\otimes\overline{E}e_i\rangle \\
	&=\bigl\langle x_1\otimes\dots\otimes x_k\otimes\overline{y},\sum_{i=1}^n(B_1^*e_i)\otimes\dots\otimes(B_k^*e_i)\otimes\overline{E}e_i\bigr\rangle,
\end{align*}
where $\overline{\rule{0pt}{1ex}\hspace{1ex}}$ means complex conjugate.
On the other hand, using $E^*=E^2=E$ we have
\begin{align*}
	||\sum_{i=1}^n(B_1^*e_i)\otimes\dots\otimes(B_k^*e_i)\otimes\overline{E}e_i||^2&=\sum_{i,j=1}^n\langle B_1^*e_i,B_1^*e_j\rangle\dots\langle B_k^*e_i,B_k^*e_j\rangle\langle \overline{E}e_i,\overline{E}e_j\rangle \\
	&=\sum_{i,j=1}^n(B_1B_1^*)_{ji}\dots(B_kB_k^*)_{ji}E_{ij} \\
	&=\sum_{i,j=1}^nE_{ij}((B_1B_1^*)\circ\dots\circ(B_kB_k^*))_{ji} \\
	&=\mathrm{trace}(E((B_1B_1^*)\circ\dots\circ(B_kB_k^*))) \\
	&=0.
\end{align*}
So
\begin{equation*}
	\sum_{i=1}^n(B_1^*e_i)\otimes\dots\otimes(B_k^*e_i)\otimes\overline{E}e_i=0,
\end{equation*}
and we find
\begin{equation*}
	\langle (B_1x_1)\circ\dots\circ(B_kx_k),Ey\rangle=0.
\end{equation*}
Since $y$ is arbitrary, we conclude
\begin{equation*}
	(B_1x_1)\circ\dots\circ(B_kx_k)\in\mathrm{range}((B_1B_1^*)\circ\dots\circ(B_kB_k^*)).
\end{equation*}
We have now shown \eqref{eqn:inclusion} and the result follows.
\end{proof}

\begin{cor}[{\cite[Theorem 5]{QZ2007LAA}}]\label{cor:Theorem 5}
Let $A_1,A_2,\dots,A_k$ be $n\times n$ positive semidefinite matrices.
Then
\begin{equation*}
	\mathrm{span}\{(A_1x_1)\circ\dots\circ(A_kx_k)|x_1,\dots,x_k\in\mathbb{C}^n\}=\mathrm{range}(A_1\circ\dots\circ A_k).
\end{equation*}
\end{cor}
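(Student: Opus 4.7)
The plan is to derive this corollary directly from Theorem~\ref{thm:Theorem 6} by factoring each positive semidefinite $A_i$ as $A_i = B_iB_i^*$. The natural choice is the unique positive semidefinite square root $B_i := A_i^{1/2}$, which satisfies $B_i = B_i^*$ and hence $B_iB_i^* = A_i$. Substituting these $B_i$ into Theorem~\ref{thm:Theorem 6} immediately yields
\[
\mathrm{span}\{(A_1^{1/2}x_1)\circ\dots\circ(A_k^{1/2}x_k)\mid x_1,\dots,x_k\in\mathbb{C}^n\} = \mathrm{range}(A_1\circ\dots\circ A_k).
\]

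The problem then reduces to showing that this span agrees with $\mathrm{span}\{(A_1x_1)\circ\dots\circ(A_kx_k)\mid x_i\in\mathbb{C}^n\}$. My key observation is that for any matrices $C_1,\dots,C_k$, the span $\mathrm{span}\{(C_1x_1)\circ\dots\circ(C_kx_k)\mid x_i\in\mathbb{C}^n\}$ depends on the $C_i$ only through the subspaces $\mathrm{range}(C_i)$, since $C_ix_i$ sweeps out precisely $\mathrm{range}(C_i)$ as $x_i$ varies over $\mathbb{C}^n$. Hence it will suffice to verify $\mathrm{range}(A_i) = \mathrm{range}(A_i^{1/2})$ for each $i$.

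This last identity is a standard fact about positive semidefinite matrices. Since both $A_i$ and $A_i^{1/2}$ are Hermitian, their ranges are the orthogonal complements of their kernels, so it is enough to check that $\ker A_i = \ker A_i^{1/2}$. One inclusion is immediate from $A_i = (A_i^{1/2})^2$, and for the other I would use the identity $\|A_i^{1/2}x\|^2 = \langle A_ix,x\rangle$. I do not foresee any real obstacle: once Theorem~\ref{thm:Theorem 6} is invoked, the only additional ingredient is this routine kernel computation.
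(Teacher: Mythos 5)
Your proposal is correct and follows exactly the paper's route: substitute $B_i=A_i^{1/2}$ into Theorem~\ref{thm:Theorem 6} and use $\mathrm{range}(A_i)=\mathrm{range}(A_i^{1/2})$. You simply spell out the details (the span depending only on the ranges, and the kernel argument for the range identity) that the paper leaves as a one-line remark.
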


\begin{proof}
Set $B_i=A_i^{\frac{1}{2}}$ $(1\leqslant i\leqslant k)$ in Theorem \ref{thm:Theorem 6} and recall $\mathrm{range}(A_i)=\mathrm{range}(B_i)$ $(1\leqslant i\leqslant k)$.
\end{proof}

\begin{cor}[{\cite[Theorem 4]{QZ2007LAA}}]\label{cor:Theorem 4}
Let $A_1,A_2,\dots,A_k$ be $n\times n$ positive semidefinite matrices.
Then
\begin{equation*}
	\mathrm{span}\{(A_1x)\circ\dots\circ(A_kx)|x\in\mathbb{C}^n\}=\mathrm{range}(A_1\circ\dots\circ A_k).
\end{equation*}
\end{cor}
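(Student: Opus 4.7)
The plan is to sandwich the single-vector span between $\mathrm{range}(A_1\circ\dots\circ A_k)$ and itself, deducing the corollary directly from Corollary \ref{cor:Theorem 5} without re-invoking the machinery of Theorem \ref{thm:Theorem 6}. Since the diagonal specialisation $x_1=\dots=x_k$ reduces the multi-vector setting to the single-vector one, one inclusion will be essentially trivial, and for the other I will exploit the columns of $A_1\circ\dots\circ A_k$ as already observed at the start of the proof of Theorem \ref{thm:Theorem 6}.

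First, I would note that setting $x_1=x_2=\dots=x_k=x$ realises $(A_1x)\circ\dots\circ(A_kx)$ as a particular instance of the tuples appearing in Corollary \ref{cor:Theorem 5}. This immediately yields
\[
\mathrm{span}\{(A_1x)\circ\dots\circ(A_kx)\mid x\in\mathbb{C}^n\}\subseteq\mathrm{span}\{(A_1x_1)\circ\dots\circ(A_kx_k)\mid x_1,\dots,x_k\in\mathbb{C}^n\}=\mathrm{range}(A_1\circ\dots\circ A_k),
\]
where the last equality is Corollary \ref{cor:Theorem 5}.

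For the reverse inclusion I would reuse the column identity $(A_1\circ\dots\circ A_k)e_i=(A_1e_i)\circ\dots\circ(A_ke_i)$, which is exactly the observation recorded in the first display of the proof of Theorem \ref{thm:Theorem 6} (with $B_j=A_j$). Each column of $A_1\circ\dots\circ A_k$ is therefore a single-vector product with $x=e_i$, so the range, being spanned by those columns, is contained in the single-vector span.

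I do not anticipate a genuine obstacle: both ingredients are already in hand, and the proof amounts to chaining them together. It is worth remarking that the column step uses no hypothesis on the $A_i$ at all, so positive semidefiniteness enters the argument only through the appeal to Corollary \ref{cor:Theorem 5}. In particular, no polarisation or symmetrisation identity is needed to pass from the multi-vector span to the single-vector span, since the columns already exhibit a spanning set of the desired diagonal form.
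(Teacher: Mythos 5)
Your argument is correct and coincides with the paper's own proof: both inclusions are obtained exactly as in the paper, namely the column identity $(A_1\circ\dots\circ A_k)e_i=(A_1e_i)\circ\dots\circ(A_ke_i)$ gives $\mathrm{range}(A_1\circ\dots\circ A_k)\subseteq\mathrm{span}\{(A_1x)\circ\dots\circ(A_kx)\mid x\in\mathbb{C}^n\}$, and the trivial containment of the single-vector span in the multi-vector span combined with Corollary \ref{cor:Theorem 5} gives the reverse. Nothing further is needed.
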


\begin{proof}
For $1\leqslant i\leqslant n$ the $i$th column of $A_1\circ\dots\circ A_k$ is
\begin{equation*}
	(A_1\circ\dots\circ A_k)e_i=(A_1e_i)\circ\dots\circ(A_ke_i).
\end{equation*}
So by Corollary \ref{cor:Theorem 5} we have
\begin{align*}
	\mathrm{range}(A_1\circ\dots\circ A_k)&\subseteq\mathrm{span}\{(A_1x)\circ\dots\circ(A_kx)|x\in\mathbb{C}^n\} \\
	&\subseteq\mathrm{span}\{(A_1x_1)\circ\dots\circ(A_kx_k)|x_1,\dots,x_k\in\mathbb{C}^n\} \\
	&=\mathrm{range}(A_1\circ\dots\circ A_k),
\end{align*}
as desired.
\end{proof}

\begin{rem}
Obviously Corollary \ref{cor:Theorem 5} in turn implies Theorem \ref{thm:Theorem 6} (cf. \cite[Theorem 6]{QZ2007LAA}).
Qiu and Zhan \cite{QZ2007LAA} proved Corollary \ref{cor:Theorem 5} first and got the other theorems.
Their proof makes full use of positive semidefiniteness, and the key lemma there \cite[Lemma 3]{QZ2007LAA} needs a slight analytic argument \cite[p.~15]{Zhan2002B}.
The idea of taking tensor products in this note is from \cite{CGS1978IM}.
See also \cite[Section 2.8]{BI1984B}.
\end{rem}

\begin{ack}
The author would like to thank Akihiro Munemasa and Hiroki Tamura for comments and discussions.
Thanks are also due to Xingzhi Zhan for carefully reading the manuscript.
\end{ack}

\small

\end{document}